\documentclass[letterpaper,11pt]{amsart}
\usepackage[latin1]{inputenc}
\theoremstyle{plain}
\newtheorem{theorem}{Theorem}

\theoremstyle{definition}

\newtheorem*{example}{Example}

\theoremstyle{remark}
\newtheorem*{remark}{Remark}

\usepackage{latexsym}
\usepackage{graphicx}
\usepackage{setspace}
\usepackage{mathtools}
\usepackage{multicol}
\usepackage{doi}
\numberwithin{equation}{section}

\newcommand\xqed[1]{%
	\leavevmode\unskip\penalty9999 \hbox{}\nobreak\hfill
	\quad\hbox{#1}}
\newcommand\demo{\xqed{$\triangle$}}

\newcommand{\transp}[1]{{#1}^\top}
\newcommand{\cda}[1]{\mathbb{A}_{#1}}
\usepackage{pst-qtree}

\mathsurround 1.5pt
\author{John W. Bales}
\address{Department of Mathematics(Retired)\\Tuskegee University\\Tuskegee, AL  36088}
\email{john.w.bales@gmail.com}
\curraddr{PO Box 210, Waverly, AL 36879}
\keywords{Cayley-Dickson algebra, doubling product, twisted group algebra, quaternions, octonions}
\subjclass{16S99,16W99}
\title{Periodicity of the Cayley-Dickson twists}
\date{}
\begin{document}
\begin{abstract}
 Regarding the Cayley-Dickson algebras as twisted group algebras, this paper identifies the periodic character of these twists.
\end{abstract}
\maketitle

\section{Introduction}
The unit basis vectors $\{e_k\}$ of Cayley-Dickson algebras may be represented as a \emph{twisted group} with $e_0$ as the group identity. For each of the eight Cayley-Dickson doubling products \cite{B2016} there is a twisting map $\omega(p,q):\mathbb{N}_0^2\mapsto\{\pm1\}$ (where $\mathbb{N}_0$ represents the non-negative integers) with the property that $e_pe_q=\omega(p,q)e_{p\oplus q}$ where $\oplus$ is a group operation on $\mathbb{N}_0$ consisting of the `bit-wise exclusive or' of the binary representations of non-negative integers.

This paper identifies the periodic nature common to all eight of these twisting maps.
\section{Background}

Cayley-Dickson algebras are here regarded as twisted group algebras \cite{BS1970,R1971} on $2^N$ dimensional Euclidean subspaces of the Hilbert space $\ell^p$ of square-summable sequences using the standard unit basis $e_0=1,0,0,0,\cdots$, $e_1=0,1,0,0,0,\cdots$ etc. together with the standard norm and inner product. Since Cayley-Dickson algebras exist in a sequence $\{\cda{k}\}$ where the algebra $\cda{N+1}$ consists of all ordered pairs $\{(a,b)\vert a,b\in\cda{N}\}$, an ordered pair $(a,b)$ will be regarded as the `shuffle' of sequence $a=a_0,a_1,a_2,\cdots$ and sequence $b=b_0,b_1,b_2,\cdots$ so that $(a,b)=a_0,b_0,a_1,b_1,\cdots$. A real number $x$ is identified with the sequence $x,0,0,0,\cdots$ so that $\cda{0}=\mathbb{R}.$ The \emph{conjugate} $x^*$ of a sequence $x=x_0,x_1,x_2,\cdots$ satisfies $x+x^*\in\mathbb{R}$ thus we define $x^*=x_0,-x_1,-x_2,\cdots$. Equivalently, 
\begin{equation}
(a,b)^*=(a^*,-b)
\end{equation}
 This approach generates the `shuffle basis' on the infinite dimensional Cayley-Dickson algebra $\cda{}$ where the unit basis vectors are defined recursively as
\begin{align}
  e_0&=1\\
  e_{2k}&=(e_k,0)\text{\ for\ }k\ge0\\
  e_{2k+1}&=(0,e_k)\text{\ for\ }k\ge0
\end{align}

The algebra $\cda{}=\bigcup\cda{k}$ contains as proper subalgebras the real numbers $\cda{0}$, the complex numbers $\cda{1}$, quaternions $\cda{2}$, octonions $\cda{3}$, sedenions $\cda{4}$, etc. In general, if $a,b\in\cda{}$ then so is the ordered pair $(a,b)$. Furthermore, for each $n\ge0$, $\cda{n}\subset\cda{n+1}$.

There exist eight basic Cayley-Dickson \emph{doubling products} \cite{B2016} which are listed in Table \ref{tab:EightProducts}.
\begin{table}[ht]
$P_{0}\,\,:   (a,b)(c,d)=(ca-b^*d,da^*+bc)$

$P_{1}\,\,:   (a,b)(c,d)=(ca-db^*,a^*d+cb)$
  
$P_{2}\,\,:   (a,b)(c,d)=(ac-b^*d,da^*+bc)$

$P_{3}\,\,:   (a,b)(c,d)=(ac-db^*,a^*d+cb)$ 

$\transp{P}_{0}:  (a,b)(c,d)=(ca-bd^*,ad+c^*b)$

$\transp{P}_{1}:  (a,b)(c,d)=(ca-d^*b,da+bc^*)$

$\transp{P}_{2}:  (a,b)(c,d)=(ac-bd^*,ad+c^*b)$

$\transp{P}_{3}:  (a,b)(c,d)=(ac-d^*b,da+bc^*)$\\
~
\caption{The Eight Cayley-Dickson Doubling Products}\label{tab:EightProducts}
\end{table}

A search of the literature on Cayley-Dickson algebras reveals the use of only the two doubling products $P_{3}$ and $\transp{P}_{3}$. In general for a doubling product $P_k$, $e_pe_q=e_r$ if and only if for the doubling product $\transp{P}_k$ it is the case that $e_qe_p=e_r$. The basis product tables for $P_k$ and $\transp{P}_k$ are each the transpose of the other. Note, however, that this realtionship holds only for the basis vectors. If $xy=z$ in $P_{k}$ it does not follow that $yx=z$ for $\transp{P}_{k}$. Attention will be restricted in this paper to the first four of the products.

The proofs of the theorems involve the use of Cayley-Dickson `twist trees.' \cite{B2009} The next two sections discuss such twists and their trees.

\section{Cayley-Dickson Twists}

For each of the eight Cayley-Dickson doubling products there is a unique \emph{twist} function $\omega:\mathbb{N}_0\times \mathbb{N}_0\mapsto\{-1,1\}$ such that for $p,q\in \mathbb{N}_0$
\begin{equation}
	e_pe_q=\omega(p,q)e_{p\oplus q}
\end{equation}
where $p\oplus q$ is the bit-wise `exclusive or' of the binary representations of $p$ and $q$. This relationship among the basis vectors is a natural result of regarding ordered pairs of sequences as the shuffle of the two sequences and is equivalent to addition in $\mathbb{Z}_2^N$.

 To illustrate
 , \[5\oplus 11=0101_B\oplus1011_B=1110_B=14\] 
 so 
 \[e_5e_{11}=\omega(5,11)e_{14}\]
 
\section{Navigating the Twist Trees}

Cayley-Dickson twist trees \cite{B2009} are abstracted from observations about the structure of the multiplication table of the unit vectors $e_k$ and are illustrated in Figures \ref{fig:quaternionTree}, \ref{fig:generalTree} and \ref{fig:fourITrees}. 

To find the value of $\omega(p,q)$ in a basis vector product $e_pe_q=\omega(p,q)e_{p\oplus q}$ one needs a set of navigation instructions for the Cayley-Dickson twist tree. This set of instructions is symbolized by the bracketed ordered pair $[p;q]$ and details how to `navigate' the tree by following a sequence of left-right instructions beginning at the root node. After navigating the tree according to the instructions $[p;q]$ the sign of the terminal node will be the value of $\omega(p,q).$

The process of converting the symbol $[p;q]$ into a set of left-right navigation instructions for the tree is as follows:

\begin{enumerate}
	\item Convert $[p;q]$ to the binary representations of $p$ and $q$ padding the smaller of the two with leading 0's when necessary to maintain an equal number of bits. Example: $[26;42]=[011010;101010]$.
	\item Shuffle the two binary strings into binary doublets. $[011010;101010]=01,10,11,00,11,00$
	\item Interpret a 0 as an instruction to move down the left branch (L) from the current node and a 1 as an instruction to move down the right branch (R) from the current node. Example $01,10,11,00,11,00\Rightarrow LR,RL,RR,LL,RR,LL$.
\end{enumerate}
The bracket notation $[p;q]$ denoting the shuffle of \emph{binary numbers} $p,\,q$ is used to avoid confusion with the parenthesis notation denoting the shuffle $(x,y)$ of \emph{number sequences} $x,\,y$.
\begin{example}
Refer for this example to the quaternion tree in Figure \ref{fig:quaternionTree}. The product $e_3e_1=\omega_3(3,1)e_{3\oplus1}=\omega(3,1)e_2$. The integers 3 and 1 are shuffled by pairing the bits of $3$ with the bits of $1$:  $[3;1]=[11;01]=10,11$. 

This string of bits is taken as the navigation instruction RL,RR for the twist tree. Following the instructions leads to the twelvth terminal node from the left which is labled with $1$. Thus $\omega(3,1)=1$. So $e_3e_1=e_2$. \demo
\end{example}
If we identify $e_1$, $e_2$ and $e_3$ with quaternions $i$, $j$ and $k$, respectively, this gives $ki=j$. This particular twist tree is identical for all four of the products $P_0$ through $P_3$ and produces the correct products for the quaternion basis vectors.

\begin{figure}[ht]
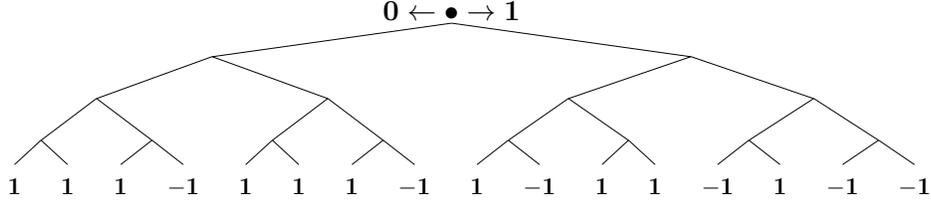

\centering
\scalebox{0.44}{
\psset{nodesep=-0.05}
\Tree[.{\larger[5]$\mathbf{0\leftarrow\bullet\rightarrow1}$}
           [.{} 
            [.{} 
             [.{} 
              [.\\{\huge$\mathbf{1}$} 
              ]
              [.\\{\huge$\mathbf{1}$} 
              ]
             ]
             [.{} 
              [.\\{\huge$\mathbf{1}$} 
              ]
              [.\\{\huge$\mathbf{-1}$} 
              ]
             ]
            ]
            [.{} 
             [.{} 
              [.\\{\huge$\mathbf{1}$} 
              ]
              [.\\{\huge$\mathbf{1}$} 
              ]
             ]
             [.{} 
              [.\\{\huge$\mathbf{1}$} 
              ]
              [.\\{\huge$\mathbf{-1}$} 
              ]
             ]
            ]
           ]
           [.{} 
            [.{} 
             [.{} 
              [.\\{\huge$\mathbf{1}$} 
              ]
              [.\\{\huge$\mathbf{-1}$} 
              ]
             ]
             [.{} 
              [.\\{\huge$\mathbf{1}$} 
              ]
              [.\\{\huge$\mathbf{1}$} 
              ]
             ]
            ]
            [.{} 
             [.{} 
              [.\\{\huge$\mathbf{-1}$} 
              ]
               [.\\{\huge$\mathbf{1}$} 
               ]
               ]
              [.{} 
               [.\\{\huge$\mathbf{-1}$} 
               ]
               [.\\{\huge$\mathbf{-1}$} 
               ]
              ]
             ]
            ]
     ]
}
\caption{Quaternion Twist Tree $P_0$ through $P_3$}
\label{fig:quaternionTree}
\end{figure}

\section{Using the general tree to calculate $e_pe_q$}

A version of the general quaternion tree is developed in \cite{B2016}. The general tree depicted in Figures \ref{fig:generalTree} and \ref{fig:fourITrees} suffices for Cayley-Dickson algebras of any dimension for products $P_0$ through $P_3$. A variation is valid for the transposes of these four products.

The meaning of the letters C, L, T, D and I are explained in \cite{B2009} (although different letters were used there) but here it will suffice to say that they represent corner (C), left (L), top (T), diagonal (D) and interior (I) as illustrated in Figure \ref{fig:cdMatrix} referencing sections of the product table of the basis vectors, specifically with regard to the values of $\omega(p,q)$ in those tables. Additionally, Figure \ref{fig:cdMatrix} indicates how an $\omega$ table for $\cda{N}$ transitions into an $\omega$ table for $\cda{N+1}$. For the complex numbers $\mathbb{C}=\cda{1}$, $\text{C}=\text{T}=\text{L}=\text{D}=\text{I}=1$. The matrix on the right shows the twist matrix for the quaternions $\mathbb{Q}=\cda{2}$. Applying the transformation one more time would show the twist matrix for the octionions $\mathbb{O}=\cda{3}$. The Cayley-Dickson tree for the eight doubling products differ only in the behavior of their interior (I) nodes, requiring a separate I-tree for each of the eight.

\begin{figure}[ht]
{\setstretch{2.25}
\begin{align*}
	\left(
	\begin{array}{r|r}
	C &  T\\
	\hline
	L   & -D\\
	\end{array}\right) &\Longrightarrow
			     \left(\begin{array}{rr|rr}
			     	C &  T &  T &  T\\
			     	L   & -D &  I & -I\\
			     	\hline
			     	L   & -I & -D &  I\\
			     	L   &  I & -I & -D
			     \end{array}\right)
\end{align*}
}
\caption{Matrix version of Figure \ref{fig:generalTree}}
\label{fig:cdMatrix}
\end{figure}

The twist tree in Figure \ref{fig:generalTree} is valid for all four doubling products $P_0$ through $P_3$ but the trees for I shown in Figure \ref{fig:fourITrees} vary.

\psset{nodesep=-0.05}
\begin{figure}[b]
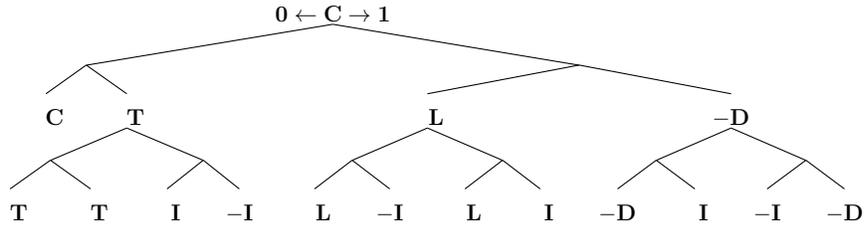

	\scalebox{0.5}{
		\Tree [.{\larger[3]$\mathbf{0\leftarrow C\rightarrow1}$} 
		[.{}  
		[.\\{\larger[3]$\mathbf{\phantom{-}C }$} 
		] 
		[.\\{\larger[3]$\mathbf{\phantom{-}T}$} 
		[.{} 
		[.\\{\larger[3]$\mathbf{\phantom{-}T}$} 
		] 
		[.\\{\larger[3]$\mathbf{\phantom{-}T}$} 
		] 
		] 
		[.{} 
		[.\\{\larger[3]$\mathbf{\phantom{-}I}$} 
		] 
		[.\\{\larger[3]$\mathbf{-I}$} 
		] 
		] 
		] 
		] 
		[.{} 
		[.\\{\larger[3]$\mathbf{\phantom{-}L}$} 
		[.{} 
		[.\\{\larger[3]$\mathbf{\phantom{-}L}$} 
		] 
		[.\\{\larger[3]$\mathbf{-I}$} 
		] 
		] 
		[.{} 
		[.\\{\larger[3]$\mathbf{\phantom{-}L}$} 
		] 
		[.\\{\larger[3]$\mathbf{\phantom{-}I}$} 
		] 
		] 
		] 
		[.\\{\larger[3]$\mathbf{-D}$} 
		[.{} 
		[.\\{\larger[3]$\mathbf{-D}$} 
		] 
		[.\\{\larger[3]$\mathbf{\phantom{-}I}$} 
		] 
		] 
		[.{} 
		[.\\{\larger[3]$\mathbf{-I}$} 
		] 
		[.\\{\larger[3]$\mathbf{-D}$} 
		] 
		] 
		] 
		] 
		] 
	}
	\caption{Twist tree for $\omega_0$ through $\omega_3$}
	\label{fig:generalTree}
\end{figure}

\begin{figure}[b]
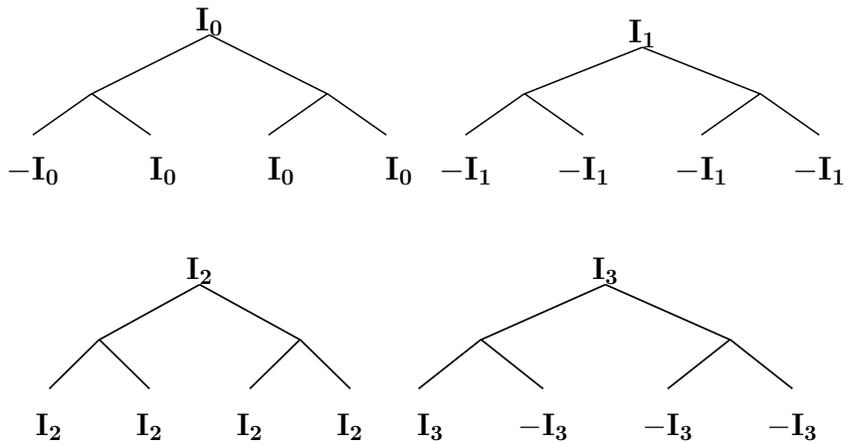

	\centering
	\scalebox{0.72}{
		\Tree[.{\larger[3]$\mathbf{I_0}$}
		[.{} 
		[.\\{\larger[3]$-\mathbf{I_0}$} 
		] 
		[.\\{\larger[3]$\phantom{-}\mathbf{I_0}$} 
		] 
		] 
		[.{} 
		[.\\{\larger[3]$\phantom{-}\mathbf{I_0}$} 
		] 
		[.\\{\larger[3]$\phantom{-}\mathbf{I_0}$} 
		] 
		] 
		]
		
		\Tree[.\\{\larger[3]$\mathbf{I_1}$}
		[.{} 
		[.\\{\larger[3]$-\mathbf{I_1}$} 
		] 
		[.\\{\larger[3]$-\mathbf{I_1}$} 
		] 
		] 
		[.{} 
		[.\\{\larger[3]$-\mathbf{I_1}$} 
		] 
		[.\\{\larger[3]$-\mathbf{I_1}$} 
		] 
		] 
		]}\\
	
	\scalebox{0.82}{
		\Tree[.\\{\larger[2]$\mathbf{I_2}$}
		[.{} 
		[.\\{\larger[2]$\mathbf{I_2}$} 
		] 
		[.\\{\larger[2]$\mathbf{I_2}$} 
		] 
		] 
		[.{} 
		[.\\{\larger[2]$\mathbf{I_2}$} 
		] 
		[.\\{\larger[2]$\mathbf{I_2}$} 
		] 
		] 
		]
		
		\Tree[.\\{\larger[2]$\mathbf{I_3}$}
		[.{} 
		[.\\{\larger[2]$\phantom{-}\mathbf{I_3}$} 
		] 
		[.\\{\larger[2]$-\mathbf{I_3}$} 
		] 
		] 
		[.{} 
		[.\\{\larger[2]$-\mathbf{I_3}$} 
		] 
		[.\\{\larger[2]$-\mathbf{I_3}$} 
		] 
		] 
		]
	}       
	
	\caption{Twist trees for interior points of $\mathbb{N}_0\times \mathbb{N}_0$}
	\label{fig:fourITrees}
\end{figure}

The following example illustrates how to use the general tree to find the product of two basis vectors using the product $P_3$ from Table \ref{tab:EightProducts}.

\begin{example}
	Let us calculate the product $e_{25}e_{17}$.
	
	First find 
	\[25\oplus17=1101_B\oplus1001_B=0100_B=8\] 
	Then 
	\[e_{25}e_{17}=\omega_3(25,17)e_8\] 
	To find the tree navigation instructions shuffle the bits of the binary representations
	\[ [25;17]=[1101;1001]=11,10,00,11(\Rightarrow RR,RL,LL,RR)\]
	In Figure \ref{fig:generalTree}, $11,10$ takes us first to $-$D then to $-$I. 
	
	Using the tree for $\text{I}_3$ in Table \ref{fig:fourITrees} but \emph{reversing all the signs} since it's actually$-\text{I}_3 $ in this case, the $00$ takes us from $-\text{I}_3$ to to $-\text{I}_3$ and the $11$ takes us from $-\text{I}_3$ to $+\text{I}_3$. The sign of the final node is always the sign of $\omega(p,q)$. 
	
	Therefore, $\omega(25,17)=1$.
	
	Thus $e_{25}e_{17}=e_8$.\demo
\end{example}

\section{Periodic properties of $\omega$}
second 
Because of the periodicities of $\omega$, computing $\omega$ for large $p,\,q$ can be simplified by computing $\omega(p^\prime,q^\prime)$ for smaller $p^\prime,\,q^\prime$ having the same $\omega$ value. This is useful, for example, in algorithms for computing products of vectors in higher dimensional Cayley-Dickson algebras. Investigation of applications of Cayley-Dickson algebras in areas such as signal processing, file compression or cryptography involving, for example $\cda{10}$ would require $2^{20}$ computations of $\omega(p,q)$ when multiplying a single pair of vectors $x$ and $y$.

\begin{theorem}
The twists of all eight products satisfy the following: \cite{B2016}
\begin{align*}
 \omega(p,0)&=\omega(0,p)=1\text{ for all }p.\\
 \omega(p,p)&=-1\text{ for all }p>0.\\
 \omega(p,q)&=-\omega(q,p)\text{ for }0\ne p\ne q\ne 0
\end{align*}
\label{Thm:known}
\end{theorem}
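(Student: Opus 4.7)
The plan is to reduce each of the three identities to elementary consequences of the defining relation $e_p e_q = \omega(p,q)\,e_{p\oplus q}$ combined with one auxiliary fact about the Cayley-Dickson construction: for every $x$ in any $\cda{n}$, one has $xx^* = |x|^2$. I would establish this auxiliary fact first, by induction on $n$ using the recursion $(a,b)^* = (a^*,-b)$ and substituting $(c,d) = (a^*,-b)$ into each of the eight doubling formulas of Table \ref{tab:EightProducts}. In every case the product collapses to $(aa^* + bb^*,\,0)$, which equals $|x|^2$ by the induction hypothesis.

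For the first identity, I would substitute $(a,b)=(1,0)$ or $(c,d)=(1,0)$ into each doubling formula; using only $0^* = 0$ and $1^* = 1$, each expression collapses to the other argument, so $e_0$ is a two-sided identity in all eight products. Since $p\oplus 0 = 0\oplus p = p$, the equation $e_0 e_p = \omega(0,p)\,e_p = e_p$ immediately forces $\omega(0,p) = 1$, and symmetrically $\omega(p,0) = 1$.

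For the second identity, note that $e_p^* = -e_p$ whenever $p>0$, since the first coordinate of the sequence representing $e_p$ vanishes. The auxiliary fact then gives $1 = |e_p|^2 = e_p\,e_p^* = -e_p^2$, so $e_p^2 = -1$, and because $p\oplus p = 0$ this forces $\omega(p,p) = -1$.

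For the third identity, I would polarize: for distinct nonzero $p,q$ the vector $e_p + e_q$ is pure imaginary of squared norm $2$, so by the same reasoning $(e_p+e_q)^2 = -2$. Expanding the square and substituting $e_p^2 = e_q^2 = -1$ together with $e_p e_q + e_q e_p = (\omega(p,q) + \omega(q,p))\,e_{p\oplus q}$ leaves $(\omega(p,q) + \omega(q,p))\,e_{p\oplus q} = 0$; since $p\oplus q \neq 0$ the vector $e_{p\oplus q}$ is linearly independent from $e_0$, forcing $\omega(p,q) = -\omega(q,p)$. The main obstacle is the auxiliary fact itself, which must be verified uniformly across all eight doubling products and is the only step in the proof requiring any real calculation; everything else is formal manipulation.
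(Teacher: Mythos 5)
Your proof is correct, but it takes a genuinely different route from the paper, which in fact offers no argument for this theorem at all: it labels the three identities as well known and defers to \cite{B2016}, and its own machinery would recover them from the twist trees (the C, L, T, D entries of Figures \ref{fig:cdMatrix} and \ref{fig:generalTree} encode precisely that the top row and left column of the $\omega$ table are $+1$, the diagonal is $-1$, and the table is antisymmetric off the diagonal). You instead derive everything from a single coordinate-free lemma, the nicely-normed property $xx^*=|x|^2$, checked uniformly across all eight products by the one substitution $(c,d)=(a^*,-b)$, and then get the three identities by formal manipulation: $e_0$ is a two-sided identity, $e_p^*=-e_p$ for $p>0$ gives $e_p^2=-1$, and polarizing on $e_p+e_q$ gives $(\omega(p,q)+\omega(q,p))e_{p\oplus q}=0$ with $e_{p\oplus q}\ne 0$. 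This buys a self-contained, tree-free proof that handles all eight products at once; the cost is carrying the induction for the auxiliary lemma. One small point to make explicit: for several of the eight products the first coordinate of $(a,b)(a^*,-b)$ comes out as $a^*a+b^*b$ or $a^*a+bb^*$ rather than literally $aa^*+bb^*$, so you should record that $a^*a=aa^*$ (immediate from $a^*=(a+a^*)-a$ with $a+a^*\in\mathbb{R}$) before invoking the induction hypothesis; likewise the claim that $e_0=(1,0)$ acts as the identity is itself a short induction, since the $1$ in the first slot is the identity of the previous level. With those details noted, every step goes through.
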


The properties in Theorem \ref{Thm:known} are well known but not theorem \ref{Thm:A}, which can be proved using the trees in Figures \ref{fig:generalTree} and \ref{fig:fourITrees}.

\begin{remark}
	For a positive integer $p$ the inequality $2^{N-1}\le p<2^N$ means that $p$ cannot be expressed in binary form with fewer than $N$ bits.
\end{remark}

\begin{theorem} The first periodicity theorem:
If $2^{N-1}\le p<2^{N}\le q<2^{N+1}$ and $k\ge0$ then
\begin{equation*}
 \omega(p,q)=\omega(p,q+k2^N)
\end{equation*}
\label{Thm:A}
\end{theorem}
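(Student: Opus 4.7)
\medskip
\noindent\textbf{Proof proposal.}

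The plan is to navigate the twist tree for $[p;q]$ and $[p;q+k2^N]$ in parallel, show that the two paths reach a common node after their respective prefixes, and conclude that the two navigations end at the same signed leaf. Since $2^{N-1}\le p<2^N$, the binary expansion of $p$ has exactly $N$ bits; likewise $q$ has $N+1$ bits with a leading $1$ in position $N$, and the low $N$ bits form $q_{\mathrm{low}}:=q-2^N$. Then $q+k2^N=(k+1)2^N+q_{\mathrm{low}}$, so its binary representation is the binary expansion of $k+1$ followed by $q_{\mathrm{low}}$. Writing $k+1$ in binary as $b_jb_{j-1}\cdots b_0$ with $b_j=1$, the total length of $q+k2^N$ is $N+1+j$ bits, and padding $p$ to match prepends $1+j$ leading zeros. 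Shuffling from MSB to LSB, the navigation string for $[p;q+k2^N]$ is the doublet $(0,1)$, then the $j$ doublets $(0,b_{j-1}),\ldots,(0,b_0)$, then the $N$ doublets $(p_{N-1},q_{N-1}),\ldots,(p_0,q_0)$; the string for $[p;q]$ is the $k=0$, $j=0$ special case.

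The heart of the argument is an absorption lemma: once we are at a node labeled $+T$, every doublet whose first bit is $0$ (that is, $LL$ or $LR$) leaves us at another $+T$ node with no sign change. This is visible by inspection of Figure \ref{fig:generalTree}, whose displayed $T$-subtree sends both $LL$ and $LR$ to leaves labeled $+T$. To get the recursive version needed for larger $\cda{N+1}$, I would run a short induction on $N$ via the doubling rule of Figure \ref{fig:cdMatrix}: the $T$-quadrant of $\cda{N+1}$ retains the self-similar outer block pattern $\bigl(\begin{smallmatrix}T&T\\ I&-I\end{smallmatrix}\bigr)$ at every level, so its top row carries the $+T$ label at every level and any first-bit-$0$ doublet applied at a $+T$-node lands at a $+T$ child.

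With the lemma in hand, both navigations arrive at a $+T$ node with the identical residual instructions $(p_{N-1},q_{N-1}),\ldots,(p_0,q_0)$: the $[p;q]$ navigator reaches $+T$ after its sole leading doublet $(0,1)$, and the $[p;q+k2^N]$ navigator reaches $+T$ after the initial $(0,1)$ and then the $j$ further $(0,\ast)$ doublets absorbed into $T$. From that common state the two navigations trace exactly the same subsequent moves, possibly descending into the same $I_k$-subtree of Figure \ref{fig:fourITrees}, and hence reach the same signed leaf, giving $\omega(p,q)=\omega(p,q+k2^N)$. The step I expect to require the most care is the absorption lemma itself, since Figure \ref{fig:generalTree} explicitly shows only the first two doublet levels; formulating and verifying its self-similar extension via the $\cda{N}\mapsto\cda{N+1}$ doubling is the real technical content, after which everything else reduces to routine bookkeeping about binary padding and shuffling.
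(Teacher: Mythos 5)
Your proposal is correct and follows essentially the same route as the paper's own proof: both track the shuffled navigation string, observe that the extra leading doublets contributed by $k2^N$ all have first bit $0$ and therefore keep the navigator at the node $T$, and conclude that the identical remaining instructions force the same terminal sign. Your explicit ``absorption lemma'' and its justification via the self-similar doubling of Figure \ref{fig:cdMatrix} merely makes precise the step the paper states in one line (``the first ellipsis can contain only 00 or 01, so the node is also T'').
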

\begin{proof}
Suppose $2^{N-1}\le p<2^{N}\le q<2^{N+1}$. 

Then the two highest order bits of $p$ are 01 and the two highest order bits of $q$ are $1x$ where $x\in\{0,1\}$. 

So $[p;q]=[01\cdots;1x\cdots]=01,1x,\cdots$. Thus, before applying the doublet $1x$ the current node is T. 

Now consider $[p;q+k2^N]=01,\cdots,1x,\cdots$.  The first ellipsis can contain only 00 or 01 so before applying the doublet $1x$ the node is also T.  

Yet in both $[p;q]=01,1x,\cdots$ and $[p;q+k2^N]=01,\cdots,1x,\cdots$ the final ellipses are identical.  Thus $\omega(p,q)=\omega(p,q+k2^N)$. 
\end{proof}

\begin{theorem}\label{Thm:B}The second periodicity theorem:
 If $2^{N-1}\le p<2^N$ and $2^{N-1}\le q<2^N$ and $k\ge0$ then
 \begin{equation*} \omega(p,q)=\omega(p+k2^N,q+k2^N)
 \end{equation*}
\end{theorem}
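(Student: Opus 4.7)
My plan is the following. The case $k=0$ is immediate, so the real work is for $k\ge1$. Let $M$ denote the number of binary digits of $k$, so that $k=k_{M-1}\cdots k_0$ with $k_{M-1}=1$. Since $2^{N-1}\le p,q<2^N$, both $p$ and $q$ have exactly $N$ binary digits with leading bit $1$; consequently $p+k2^N$ and $q+k2^N$ both have exactly $N+M$ digits, and the shuffles $[p+k2^N;q+k2^N]$ and $[p;q]$ need no padding.

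Writing $d_i$ for the doublet formed from the $i$th bits of $p$ and $q$, I would spell out the two doublet sequences as
\[
[p;q]=11,d_{N-2},\ldots,d_0,
\]
\[
[p+k2^N;q+k2^N]=k_{M-1}k_{M-1},\ldots,k_0k_0,\,11,\,d_{N-2},\ldots,d_0.
\]
Thus the longer sequence differs from the shorter one only by having $M$ extra leading symmetric doublets $k_ik_i\in\{00,11\}$ prepended. The task is to show that these extra doublets leave the tree navigation in the same intermediate state that $[p;q]$ reaches after its opening doublet $11$.

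The key step will use Figure~\ref{fig:generalTree}. Since $k_{M-1}=1$, the first doublet $k_{M-1}k_{M-1}=11$ (i.e.\ RR) sends the root $C$ to $-D$. Inspecting the $-D$ subtree in Figure~\ref{fig:generalTree}, both the LL and the RR leaves at depth $4$ are again $-D$; I read this as saying that any symmetric doublet fixes a $D$-type node and makes no sign contribution. By the self-similarity of the tree --- which is what allows a single figure to describe $\cda{N}$ for every $N$ --- the same behaviour persists at every depth. Consequently the remaining $M-1$ symmetric doublets from $k$, and then the $11$ from the leading bits of $p,q$, all leave the navigation at $-D$, exactly where $[p;q]$ stands after its opening $11$. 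Since the two navigations then execute the identical tail $d_{N-2},\ldots,d_0$, the terminal signs coincide.

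The point that will deserve the most care is the self-similarity claim: whenever the navigation lands on a $D$-type node at an arbitrary depth, the local structure below it must again have $D$ on its LL and RR branches. I would justify this inductively from the block-matrix transition in Figure~\ref{fig:cdMatrix}, where the lower-right block shows that $-D$ reappears on the diagonal of the $\cda{N+1}$ twist matrix whenever it appears on the diagonal of the $\cda{N}$ twist matrix, keeping the $D$-diagonal stable under the doubling step.
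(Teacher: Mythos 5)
Your proposal is correct and follows essentially the same route as the paper's proof: both observe that the prepended doublets contributed by $k$ are all symmetric ($00$ or $11$) with the first being $11$, that this lands the navigation at $-$D, and that symmetric doublets fix a $D$-type node, so both paths enter the common tail at $-$D and must terminate with the same sign. The self-similarity point you single out for care is exactly the recursive reading of Figure \ref{fig:generalTree} (equivalently the block transition of Figure \ref{fig:cdMatrix}) that the paper uses implicitly.
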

\begin{proof}
Here the path $[p;q]=[1\cdots;1\cdots]=11,\cdots$ begins at $-$D.

In the path $[p+k2^N;q+k2^N]=[1\cdots1\cdots;1\cdots1\cdots]=11,\cdots,11,\cdots$ the first instructions brings one to $-$D and the first ellipsis consists entirely of 00s or 11s. Thus the path remains at $-$D until reaching the final ellipsis. But in paths $[p;q]$ and $[p+k2^N;q+k2^N]$, the final ellipses are the same. 

Thus $\omega(p,q)=\omega(p+k2^N,q+k2^N)$.
\end{proof}
\section{The modularity properties}

Using modular arithmetic, properties related to those in Theorems \ref{Thm:A} and \ref{Thm:B} may be stated.

\begin{theorem}
 If $2^{N-1}\le p<2^{N}\le q$ then 
 
 \begin{equation*}
 \omega(p,q)=\omega(p,2^N+q\bmod{2^N})
 \end{equation*}
\end{theorem}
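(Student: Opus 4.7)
The plan is to reduce this statement directly to Theorem \ref{Thm:A}, the first periodicity theorem. The idea is that $2^N + (q \bmod 2^N)$ is precisely the element of the interval $[2^N,2^{N+1})$ obtained from $q$ by deleting all bits above position $N-1$ and restoring the leading $1$ at position $N$. The difference between $q$ and this reduced value is therefore a non-negative multiple of $2^N$, which is exactly the shape of shift that Theorem \ref{Thm:A} is designed to absorb.

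Concretely, first I would set
\[
 q' \;=\; 2^N + (q \bmod 2^N),
\]
and observe that $2^N \le q' < 2^{N+1}$ by construction. Next I would write $q = m\cdot 2^N + (q \bmod 2^N)$ with $m = \lfloor q/2^N\rfloor$, and note that $m \ge 1$ because of the hypothesis $q \ge 2^N$. Consequently
\[
 q \;=\; q' + (m-1)\, 2^N,
\]
so letting $k = m-1 \ge 0$ gives $q = q' + k\,2^N$.

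At this stage every hypothesis of Theorem \ref{Thm:A} is in place for the pair $(p,q')$: we have $2^{N-1}\le p<2^N$ by assumption, and $2^N\le q'<2^{N+1}$ by construction. Applying that theorem to $q'$ with shift $k\,2^N$ yields
\[
 \omega(p,q') \;=\; \omega(p,\,q' + k\,2^N) \;=\; \omega(p,q),
\]
which is precisely the desired identity $\omega(p,q) = \omega\!\left(p,\, 2^N + q \bmod 2^N\right)$.

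There is essentially no obstacle here once the reduction is identified; the only thing that has to be checked with a little care is the arithmetic verifying that $k = \lfloor q/2^N\rfloor - 1$ is a non-negative integer, so that Theorem \ref{Thm:A} genuinely applies. The hypothesis $q\ge 2^N$ is used exactly at this point to guarantee $k\ge 0$, and the hypothesis $2^{N-1}\le p<2^N$ is used to ensure that the two high-order bits of $p$ are $01$, which is what makes Theorem \ref{Thm:A} valid for the range of shifts we need.
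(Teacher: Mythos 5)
Your proof is correct, but it takes a different route from the paper. The paper proves this theorem from scratch by a second tree-navigation argument: it writes out the binary strings of $p$ and $q$ explicitly, observes that the doublets between bit position $M$ and bit position $N$ are all $00$ or $01$ and therefore leave the walk parked at node T, and concludes that those bits may be deleted. You instead observe that $q'=2^N+(q\bmod 2^N)$ lies in $[2^N,2^{N+1})$ and differs from $q$ by $k2^N$ with $k=\lfloor q/2^N\rfloor-1\ge 0$, so the statement is an immediate corollary of Theorem \ref{Thm:A} applied to the pair $(p,q')$; the arithmetic $q=q'+k2^N$ and the verification $k\ge0$ are both right. Your reduction is arguably cleaner: it isolates all the tree reasoning inside Theorem \ref{Thm:A} and makes this theorem a one-line consequence, whereas the paper's direct argument repeats essentially the same T-node analysis but has the minor advantage of being self-contained and of generalizing verbatim to the companion modularity theorem for the $-$D node (which you would analogously derive from Theorem \ref{Thm:B}).
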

 \begin{proof}
 Suppose $2^{M-1}\le q<2^M$ where $N\le M$. Represent $p$ by the the binary string $0\cdots 1\cdots$ where the $1$ is the $N$th bit from the right and $0$ the $M$th. Represent $q$ by the binary string $1\cdots x\cdots$ where $x$ is the $N$th bit from the right and $1$ the $M$th. Then $[p;q]=01,\cdots,1x,\cdots$. In the tree diagram in Figure \ref{fig:generalTree}, on page \ref{fig:generalTree} the instruction $01$ brings us to $T$. The first ellipsis in $[p;q]$ is either null or consists of only $00$ or $01$, either of which leaves one at $T$. So the value of $\omega(p,q)$ will be the same as it would if the first ellipsis were empty. If $q$, with binary representation $1\cdots x\cdots$ were to be replaced with $q^\prime$ with binary string $1x\cdots$ where the rightmost ellipsis of $q$ is the same binary string as the ellipsis in $q^\prime$, then $\omega(p,q)=\omega(p,q^\prime).$ So $q$ may be replaced by $q^\prime=2^N+q\bmod{2^N}$ and the value of $\omega$ will be the same.
\end{proof}
\begin{example} 
	$\omega(5,481)=\omega(5,2^3+481\bmod{2^3})=\omega(5,9)$	\demo
\end{example}
\begin{theorem}
Suppose $2^{M-1}\le p<2^{M}$, $2^{M-1}\le q<2^{M}$ and $2^{N-1}\le p\oplus q<2^{N}$. Then $N<M$ and
 \begin{equation*}
  \omega(p,q)=\omega(2^N+p\bmod{2^N},2^N+q\bmod{2^N})
 \end{equation*}
\end{theorem}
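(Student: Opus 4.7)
The plan is to reduce the claim to a careful inspection of the shuffled navigation string $[p;q]$ in the light of Figure \ref{fig:generalTree}, using the same style of argument as in the proof of Theorem \ref{Thm:B}. First I would establish $N<M$. Since $2^{M-1}\le p<2^M$ and $2^{M-1}\le q<2^M$, both $p$ and $q$ have a $1$ in bit position $M-1$, so this bit cancels in $p\oplus q$. Hence $p\oplus q<2^{M-1}$, which forces $N-1<M-1$, i.e.\ $N<M$.

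Next I would write $p$ and $q$ in $M$-bit binary as $p=1\,b_{M-2}\cdots b_0$ and $q=1\,c_{M-2}\cdots c_0$. The hypothesis $2^{N-1}\le p\oplus q<2^N$ says precisely that $b_i=c_i$ for $N\le i\le M-2$ and that $b_{N-1}\ne c_{N-1}$. Setting $p'=2^N+p\bmod 2^N$ and $q'=2^N+q\bmod 2^N$, the $(N+1)$-bit binary representations of $p'$ and $q'$ are $1\,b_{N-1}\cdots b_0$ and $1\,c_{N-1}\cdots c_0$. Comparing the shuffled instructions gives
\begin{align*}
[p;q]   &= 11,\; b_{M-2}b_{M-2},\; \ldots,\; b_N b_N,\; b_{N-1}c_{N-1},\; \ldots,\; b_0 c_0,\\
[p';q'] &= 11,\; b_{N-1}c_{N-1},\; \ldots,\; b_0 c_0.
\end{align*}
The two paths thus agree on the first doublet $11$ and on the final block of $N$ doublets; the only difference is the intervening $M-1-N$ doublets in $[p;q]$, each of which is either $00$ or $11$ (since $b_i=c_i$ in that range).

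The key observation, to be extracted from Figure \ref{fig:generalTree}, is that after reading the initial $11$ we are sitting at the node $-D$, and at $-D$ the two doublets $00$ and $11$ both loop back to a node still labeled $-D$ (while $01$ and $10$ would send us into an $I$-subtree). Therefore every one of the intermediate "matching" doublets in $[p;q]$ leaves us at $-D$, so the path for $[p;q]$ reaches the same $-D$ node, with the same residual instructions $b_{N-1}c_{N-1},\ldots,b_0c_0$, that the path $[p';q']$ reaches after its first doublet. Hence the two navigations terminate at the same leaf, and $\omega(p,q)=\omega(p',q')$.

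The only step that needs any real care is the loop-at-$-D$ property; all the rest is bookkeeping of binary digits. I would check that property directly from the diagram (the two grandchildren of $-D$ in Figure \ref{fig:generalTree} via the doublets $00$ and $11$ are both labeled $-D$), observing that it is in fact precisely the same fact that powers Theorem \ref{Thm:B}, and then conclude.
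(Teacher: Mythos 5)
Your proof is correct and follows essentially the same route as the paper's: shuffle the binary strings, note that the doublets between the leading $11$ and position $N-1$ are all $00$ or $11$ because the high bits of $p$ and $q$ agree, and use the fact that $00$ and $11$ both fix the $-D$ node in Figure \ref{fig:generalTree} to delete those doublets, which is exactly the replacement $p\mapsto 2^N+p\bmod 2^N$, $q\mapsto 2^N+q\bmod 2^N$. The only difference is that you explicitly verify the claim $N<M$ (via cancellation of the shared leading bit in $p\oplus q$), which the paper asserts but does not argue.
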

\begin{proof}
Represent $p=1\cdots x\cdots$ and $q=1\cdots y\cdots$ with both $1$s the $M$th bit from the right and with $x$ and $y$ the $N$th bits from the right. Then $[p;q]=11,\cdots xy,\cdots$ with the first ellipsis being empty or consisting of only $11$ or $00$. So applying these navigating instructions for the tree in Figure \ref{fig:generalTree} places one at $-D$ when arriving at the instruction $xy$. So it would have been the same as with binary $p^\prime=1x\cdots$ and $q^\prime=1y\cdots$ with the $1$s occupying the $(N+1)$st bit from the right (representing $2^N$) and the two ellipses the same as the rightmost ellipses of $p$ and $q$. Thus $\omega(p,q)=\omega(p^\prime,q^\prime)$ where $p^\prime=2^N+p\bmod{2^N}$ and $q^\prime=2^N+q\bmod{2^N}$.
\end{proof}
\begin{example}
 $483\oplus481=2$ and $2^1\le2<2^2$ so $N=2$. So one concludes that $\omega(483,481)=\omega(2^2+483\bmod{2^2},2^2+481\bmod{2^2})=\omega(7,5)$ \demo
\end{example} 

\section{Conclusion}

The lack of a standard indexing system for the Cayley-Dickson basis vectors together with the fact that there is more than one possible doubling product have tended to obscure the basic periodicity of the twisting maps of these products. It is hoped that these modest results will help to clarify the issue and prove useful for further research.

\end{document}